\newtheorem{thm}{Theorem}[section]
\newtheorem{prop}[thm]{Proposition}
\newcommand{\C}{{\mathbb C}}
\newcommand{\D}{{\mathbb D}}
\newcommand{\T}{{\mathbb T}}
\newcommand{\f}{\frac}
\newcommand{\ov}{\overline}
\newcommand{\la}{\lambda}
\newcommand{\ze}{\zeta}
\renewcommand{\th}{\theta}
\newcommand{\si}{\sigma}
\numberwithin{equation}{section}
\title[A characterization of M\"obius transformations]
{A characterization of M\"obius transformations\\
\quad\\
{\it Une caract\'erisation des transformations de M\"obius}}
\author{Konstantin M. Dyakonov}
\address{ICREA and Universitat de Barcelona, Departament de Matem\`atica 
Aplicada i An\`alisi, Gran Via 585, E-08007 Barcelona, Spain}
\email{konstantin.dyakonov@icrea.cat}
\keywords{M\"obius transformation, inner function, outer function, Nevanlinna class} 
\subjclass[2000]{30D50, 30D55, 46E15.} 
\thanks{Supported in part by grant MTM2011-27932-C02-01 from El Ministerio de Ciencia 
e Innovaci\'on (Spain) and grant 2014-SGR-289 from AGAUR (Generalitat de Catalunya).}
\begin{document}
\begin{abstract}
We prove that the derivative $\th'$ of an inner function $\th$ is outer if and only if $\th$ is 
a M\"obius transformation. An alternative characterization involving a reverse Schwarz--Pick type 
estimate is also given. 

\bigskip

\noindent{\bf R\'esum\'e.} Etant donn\'ee une fonction int\'erieure $\th$, on d\'emontre que sa 
d\'eriv\'ee $\th'$ est ext\'erieure si et seulement si $\th$ est une transformation de M\"obius. 
\end{abstract}

\maketitle

\section{Introduction and main result}

Let $H^\infty$ stand for the algebra of bounded holomorphic functions on the disk $\D:=\{z\in\C:|z|<1\}$. 
A function $\th\in H^\infty$ is called {\it inner} if $\lim_{r\to1^-}|\th(r\ze)|=1$ at almost 
every point $\ze$ of the circle $\T:=\partial\D$. Among the nonconstant inner functions, 
the simplest ones are undoubtedly the conformal automorphisms of the disk, also known as 
{\it M\"obius transformations}; these are of the form 
$$\th_{\la,a}(z):=\la\frac{z-a}{1-\bar az}$$ 
for some $\la\in\T$ and $a\in\D$. A calculation shows that 
$$\th'_{\la,a}(z)=\la\frac{1-|a|^2}{(1-\bar az)^2},$$ 
which happens to be an {\it outer} function. (A nonvanishing holomorphic function $f$ on $\D$ is 
said to be outer if $\log|f|$ agrees with the harmonic extension of an integrable function on $\T$.) 
\par In this note, we prove that the property of $\th'$ being outer actually characterizes 
the M\"obius transformations among all inner functions $\th$. 
\par Before stating the result rigorously, we need to recall that the {\it Nevanlinna class} 
$\mathcal N$ (resp., the {\it Smirnov class} $\mathcal N^+$) is formed by the functions that can be 
written as $u/v$, where $u,v\in H^\infty$ and $v$ is zero-free (resp., outer) on $\D$. The reader is 
referred to \cite[Chapter II]{G} for further information on $\mathcal N$ and $\mathcal N^+$, including 
the canonical factorization theorem for functions from these spaces. We also mention the fact that, for 
$\th$ inner, one has $\th'\in \mathcal N$ if and only if $\th'\in\mathcal N^+$; see \cite{AC} for a proof. 
In what follows, we are forced to require that $\th'$ be in $\mathcal N$ (or $\mathcal N^+$), since this 
is apparently the weakest natural assumption that allows us to speak of the inner-outer factorization 
for $\th'$. 

\begin{thm}\label{thm:charm} Let $\th$ be a nonconstant inner function with $\th'\in\mathcal N$. 
Then $\th'$ is outer if and only if $\th$ is a M\"obius transformation. 
\end{thm}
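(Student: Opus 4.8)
The forward implication is the computation recalled in the introduction: if $\th=\th_{\la,a}$ then $\th'=\la(1-|a|^2)(1-\bar a z)^{-2}$, and since $1-\bar a z$ is bounded away from $0$ on $\D$ it is invertible in $H^\infty$, hence outer; thus $\th'$ is outer. For the converse I would first normalize. Both hypotheses are invariant under post-composition with a disk automorphism $\tau$: indeed $(\tau\circ\th)'=(\tau'\circ\th)\,\th'$, and $\tau'\circ\th=e^{i\gamma}(1-|c|^2)(1-\bar c\,\th)^{-2}$ is outer because $1-\bar c\,\th$ is bounded away from $0$, so $(\tau\circ\th)'$ is outer (resp.\ in $\mathcal N^+$) exactly when $\th'$ is, while $\tau\circ\th$ is M\"obius exactly when $\th$ is. Choosing $\tau$ with $\tau(\th(0))=0$, I may assume $\th(0)=0$; and since $\th'$ is outer it is zero-free, so $\th'(0)\neq0$.

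The engine is the boundary identity for inner functions. Writing $\th(e^{it})=e^{i\psi(t)}$ and differentiating in $t$ gives $e^{it}\th'(e^{it})=\dot\psi(t)\,\th(e^{it})$, that is $\ze\th'=|\th'|\,\th$ a.e.\ on $\T$ (with $|\th'|=\dot\psi\ge0$). Now set $G:=\th/(z\th')$. After the normalization $\th(0)=0$ we have $\th/z\in H^\infty$, and since $\th'$ is outer, $1/\th'\in\mathcal N^+$; hence $G=(\th/z)\cdot(1/\th')\in\mathcal N^+$. Using $\ze\th'=|\th'|\,\th$ I compute the boundary values
\[
G^*=\frac{\th}{\ze\th'}=\frac{1}{|\th'|}\ge 0\qquad\text{a.e.\ on }\T .
\]
The plan is then to invoke the following lemma: \emph{an $\mathcal N^+$-function with nonnegative boundary values a.e.\ is constant.} Granting it, $G\equiv G(0)$, and $G(0)=\lim_{z\to0}(\th/z)/\th'=\th'(0)/\th'(0)=1$, so $\th=z\th'$. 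Comparing Taylor coefficients in $\th=\sum_{n\ge1}c_nz^n$ forces $(n-1)c_n=0$, whence $\th=\la z$ with $|\la|=1$; undoing the normalization then exhibits the original $\th$ as a M\"obius transformation.

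The main obstacle is precisely that lemma. Reality of the boundary values alone would not suffice, since $i(1+z)/(1-z)\in\mathcal N^+$ has real (sign-changing) boundary values $-\cot(t/2)$; it is the \emph{sign} condition that is decisive. I would prove it from the canonical factorization $G=I_G\,O_G$: as $|O_G^*|=G^*\ge0$, the relation $\arg G^*=0$ forces $\arg I_G^*=-\widetilde{\log G^*}$ a.e., where $\widetilde{\ \cdot\ }$ is the conjugate function. The left-hand side is the boundary argument of an inner function, i.e.\ the conjugate of $\log|I_G|=-(\text{Green potential of the zeros})-P[\mu]$, a nonpositive superharmonic function with boundary values $0$ a.e.; the right-hand side is the conjugate of the $L^1$-function $\log G^*$. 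Since two harmonic functions sharing a conjugate differ by a constant, and since the Poisson integral of a nonzero singular measure (or a nonzero Green potential) has boundary values $0$ a.e., matching the two sides forces the potential and $\mu$ to vanish and $\log G^*$ to be constant, so $I_G$ and $O_G$ are both constant. Equivalently, one can try a Smirnov-type maximum principle applied to $(G-1)/(G+1)$, whose boundary modulus is $\le1$; the delicate point is to verify this quotient lies in $\mathcal N^+$ (equivalently, that $G+1$ is outer and zero-free). I expect essentially all the work to be concentrated in this lemma, the remainder being the short reduction above.
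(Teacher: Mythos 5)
Your reduction (the normalization $\th(0)=0$ via post-composition, the boundary identity $\ze\th'=|\th'|\,\th$, and the function $G=\th/(z\th')$) is sound, but the lemma carrying all the weight --- \emph{an $\mathcal N^+$-function with nonnegative boundary values a.e.\ is constant} --- is false. A counterexample is
$$G_0(z):=-\left(\frac{1+z}{1-z}\right)^2,$$
which lies in $\mathcal N^+$ (an $H^\infty$ function divided by the outer function $(1-z)^2$), is nonconstant, and has boundary values $G_0^*=\cot^2(t/2)>0$ a.e.\ on $\T$. This example also pinpoints where both of your proposed proofs of the lemma break down. In the argument-matching proof, the inner factor of $G_0$ is the unimodular constant $-1$: its argument $\pi$ is exactly the sort of additive constant that conjugate-function identities cannot detect, so nothing forces it to be $1$. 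In the alternative route, $G_0+1=-4z/(1-z)^2$ has inner factor $z$, so $(G_0-1)/(G_0+1)=(1+z^2)/(2z)\notin\mathcal N^+$; the ``delicate point'' you flag is not a technicality but precisely what fails in general. What the lemma lacks is an integrability hypothesis: if additionally $G^*\in L^1(\T)$, then Smirnov's maximum principle gives $G\in H^1$, whence $\Im G$ is the Poisson integral of $\Im G^*=0$ and $G$ is constant; without it ($\cot^2(t/2)\notin L^1$) the conclusion is simply wrong.

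The gap is reparable in your specific situation, and the repair identifies the missing ingredient: Julia's lemma. Once $\th(0)=0$, Julia's inequality \eqref{eqn:juliabis} at the point $z=0$ gives $|\th'(\ze)|\ge1$ for a.e.\ $\ze\in\T$, hence $G^*=1/|\th'|\in[0,1]$ a.e.; Smirnov's maximum principle then puts $G$ in the unit ball of $H^\infty$, and a bounded holomorphic function with a.e.\ real boundary values is constant. Since $G(0)=\th'(0)/\th'(0)=1$ (legitimate, as $\th'$ is outer, hence zero-free), you get $\th=z\th'$ and $\th=\la z$ as you argue. Two further remarks: your derivation of $\ze\th'=|\th'|\,\th$ by differentiating $\th(e^{it})=e^{i\psi(t)}$ is only heuristic, since boundary values of inner functions need not be differentiable; the correct justification is the Julia--Carath\'eodory theorem applied at a.e.\ point where $\th'$ has a finite radial limit and $|\th|$ has radial limit $1$. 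And it is worth noting that the repaired argument rests on the same two pillars as the paper's proof --- Julia's lemma plus Smirnov's maximum principle --- except that the paper applies Julia's lemma at \emph{every} $z\in\D$ and deduces equality in the Schwarz--Pick inequality, thereby avoiding the normalization, the boundary identity, and the function $G$ altogether.
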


\par In some special cases, the fact that the derivative of a non-M\"obius inner function will have 
a nontrivial inner part may be obvious or due to known results. First of all, $\th'$ will certainly vanish 
at the multiple zeros of $\th$, if any. Secondly, a result of Ahern and Clark (see \cite[Corollary 4]{AC}) 
tells us that the singular factor of $\th$, if existent, gets inherited by $\th'$, provided that the 
latter function is in $\mathcal N$. Thus, in a sense, singular factors can be thought of as responsible 
for the (boundary) zeros of infinite multiplicity. Thirdly, if $\th$ is a finite Blaschke product with 
at least two zeros, then $\th'$ is sure to have zeros in $\D$ (see \cite{W} for a more precise information 
on the location of these), so $\th'$ will again be non-outer. The remaining case, where $\th$ is an infinite 
Blaschke product with simple zeros, seems however to be new. 

\section{Proof of Theorem \ref{thm:charm}}

To prove the nontrivial part of the theorem, assume that $\th$ is inner and $\th'$ is an outer function 
in $\mathcal N$. 

\par For all $z\in\D$ and almost all $\ze\in\T$, Julia's lemma (see \cite{Car} or \cite[p.\,41]{G}) yields 
\begin{equation}\label{eqn:julia}
\f{|\th(\ze)-\th(z)|^2}{1-|\th(z)|^2}\le|\th'(\ze)|\cdot\f{|\ze-z|^2}{1-|z|^2}, 
\end{equation}
or equivalently, 
\begin{equation}\label{eqn:juliabis}
\f{1-|z|^2}{1-|\th(z)|^2}\cdot\left|\f{1-\ov{\th(z)}\th(\ze)}{1-\ov z\ze}\right|^2\le|\th'(\ze)|.
\end{equation}
Further, we associate with every (fixed) $z\in\D$ the $H^\infty$-function 
\begin{equation}\label{eqn:phiz}
\Phi_z(w):=\f{1-|z|^2}{1-|\th(z)|^2}\cdot\left(\f{1-\ov{\th(z)}\th(w)}{1-\ov zw}\right)^2
\end{equation}
and rewrite \eqref{eqn:juliabis} in the form 
\begin{equation}\label{eqn:estone}
|\Phi_z(\ze)|\le|\th'(\ze)|,\qquad\ze\in\T.
\end{equation}

\par Since $\Phi_z\in H^\infty$ and $\th'$ is outer, the ratio $\psi_z:=\Phi_z/\th'$ will be in $\mathcal N^+$; 
and since, by \eqref{eqn:estone}, $|\psi_z|\le 1$ a.\,e. on $\T$, it follows that $\psi_z$ is 
in $H^\infty$ and has norm at most $1$. In other words, the estimate \eqref{eqn:estone} extends into 
the disk, so that 
$$|\Phi_z(w)|\le|\th'(w)|,\qquad w\in\D.$$ 
In particular, putting $w=z$, we obtain 
\begin{equation}\label{eqn:estatz}
|\Phi_z(z)|\le|\th'(z)|.
\end{equation}
A glance at \eqref{eqn:phiz} reveals that 
$$|\Phi_z(z)|=\Phi_z(z)=\f{1-|\th(z)|^2}{1-|z|^2},$$
and plugging this into \eqref{eqn:estatz} gives 
$$\f{1-|\th(z)|^2}{1-|z|^2}\le|\th'(z)|.$$
In conjunction with the Schwarz--Pick estimate 
\begin{equation}\label{eqn:schp}
|\th'(z)|\le\f{1-|\th(z)|^2}{1-|z|^2}
\end{equation}
(see \cite[Chapter I, Section 1]{G}), this means that we actually have equality in \eqref{eqn:schp}. This last 
fact is known to imply that $\th$ is a M\"obius transformation (see {\it ibid.}), and the proof is complete. 

\section{An alternative characterization and open questions} 

The primary purpose of this note, essentially accomplished by now, can be described as giving a short 
and self-contained proof of a result from \cite{DCMFT}. In that paper, our main concern was a certain 
reverse Schwarz--Pick type inequality for unit-norm $H^\infty$ functions (see also \cite{DSpb} for an 
earlier version); the above characterization of M\"obius transformations was then deduced as a corollary. 
In addition, it was shown in \cite[Section 2]{DCMFT} that, among the nonconstant inner functions $\th$ 
with $\th'\in \mathcal N$, the M\"obius transformations are also characterized by the property that 
\begin{equation}\label{eqn:esteta1}
\eta\left(\f{1-|\th(z)|^2}{1-|z|^2}\right)\le|\th'(z)|,\qquad z\in\D, 
\end{equation}
for some nondecreasing function $\eta:(0,\infty)\to(0,\infty)$. We now improve this last result by 
relaxing the {\it a priori} assumptions on $\th$. In fact, it turns out that we need not restrict our 
attention to inner functions from the start. Instead, we shall verify that $\th$ will have to be inner 
(and with derivative in $\mathcal N$) automatically, under the milder hypotheses below. 

\begin{prop}\label{prop:impr} Let $\th\in H^\infty$ be a nonconstant function with $\|\th\|_\infty\le1$. 
The following conditions are equivalent. 
\par{\rm(i)} $\th$ is a M\"obius transformation. 
\par{\rm(ii)} There is a nondecreasing function $\eta:(0,\infty)\to(0,\infty)$ with 
$\lim_{t\to\infty}\eta(t)=\infty$ making \eqref{eqn:esteta1} true. 
\end{prop}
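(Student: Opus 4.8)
The plan is to prove Proposition 2.1 (the equivalence (i)⟺(ii)).The plan is to prove the two implications separately, the first being immediate and the second requiring us to \emph{upgrade} the regularity of $\th$ before invoking the already-established inner-function case. For (i)$\Rightarrow$(ii), if $\th$ is a M\"obius transformation then equality holds in the Schwarz--Pick estimate \eqref{eqn:schp}, i.e. $|\th'(z)|=\f{1-|\th(z)|^2}{1-|z|^2}$ for every $z\in\D$; thus \eqref{eqn:esteta1} holds with the choice $\eta(t)=t$, which is nondecreasing and tends to $\infty$.

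The substance is (ii)$\Rightarrow$(i). Write $Q(z):=\f{1-|\th(z)|^2}{1-|z|^2}$, so that Schwarz--Pick reads $|\th'(z)|\le Q(z)$ while \eqref{eqn:esteta1} reads $\eta(Q(z))\le|\th'(z)|$. I would first record the cheap but decisive consequence that, since $\eta$ takes values in $(0,\infty)$, one has $|\th'(z)|\ge\eta(Q(z))>0$ for every $z\in\D$; hence $\th'$ is zero-free in $\D$. This already excludes multiple zeros of $\th$ and, by the known location of the critical points of a finite Blaschke product, every finite Blaschke product with at least two zeros. Next I would show that $\th$ must be inner. If it were not, the set $E:=\{\ze\in\T:|\th(\ze)|<1\}$ would have positive measure, and along almost every radius terminating on $E$ we would have $1-|\th(r\ze)|^2\to 1-|\th(\ze)|^2>0$ while $1-r^2\to0$, so that $Q(r\ze)\to\infty$ and therefore, by \eqref{eqn:esteta1} and $\eta(t)\to\infty$, $|\th'(r\ze)|\to\infty$. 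The task is then to contradict this: I would argue that $\th'$ cannot tend to $\infty$ radially on an entire set of positive measure on which $\th$ itself has nontangential limits of modulus $<1$, the point being that near such boundary points $\th$ is, in an averaged sense, too flat for its derivative to blow up everywhere. Concretely, I would invoke a Fatou/Plessner-type dichotomy for the meromorphic function $\th'$ to produce a finite radial limit of $\th'$ on a positive-measure subset of $E$, contradicting the blow-up.

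Granting inner-ness, it remains to verify that $\th'\in\mathcal N$, after which the Proposition follows from the inner-function version of the equivalence established in \cite[Section 2]{DCMFT} (equivalently, from the mechanism behind Theorem \ref{thm:charm}): among nonconstant inner functions with derivative in $\mathcal N$, only the M\"obius transformations satisfy \eqref{eqn:esteta1}. I expect the verification that $\th'\in\mathcal N$ to be the main obstacle, and the difficulty is structural: \eqref{eqn:esteta1} is only a \emph{lower} bound on $|\th'|$, whereas membership in $\mathcal N$ is an \emph{upper} (integrability) condition on $\log^+|\th'|$, so the two cannot be matched directly.

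The leverage I would exploit is that \eqref{eqn:esteta1} holds with a \emph{single} $\eta$ and hence provides a uniform implication ``$Q$ large $\Rightarrow$ $|\th'|$ large''. The strategy is to show that any inner function whose derivative escapes $\mathcal N$ must develop, somewhere in $\D$, points where $Q$ is large while $|\th'|$ is comparatively small --- exactly as happens at a clustered Blaschke zero, where $|\th'|$ is deflated by the remaining factors, or upon approaching the carrier of a singular measure, where the inner factor forces $|\th|\to0$, hence $Q\to\infty$, while $|\th'|\to0$. Each such configuration violates the uniform reverse estimate \eqref{eqn:esteta1}, so ruling them out is what forces $\th'\in\mathcal N$. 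Turning these model computations into a general argument is the crux, and I anticipate it will require combining the zero-freeness of $\th'$ with the Schwarz--Pick bound $|\th'|\le Q$ to control the canonical factorization of $\th$ itself.
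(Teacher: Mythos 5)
Your skeleton for (ii)$\implies$(i) --- first show $\th$ is inner, then show $\th'\in\mathcal N$, then invoke the result of \cite{DCMFT} --- is the same as the paper's, but both of the substantive steps are left with genuine gaps, and both gaps dissolve via a single observation you missed. By Schwarz's lemma applied to $\ph\circ\th$, where $\ph$ is the disk automorphism sending $\th(0)$ to $0$, the quantity $Q(z):=\f{1-|\th(z)|^2}{1-|z|^2}$ is bounded below on \emph{all} of $\D$ by a positive constant $c$ (not merely pointwise positive, which is all you record). Since $\eta$ is nondecreasing and positive, \eqref{eqn:esteta1} then gives the \emph{uniform} bound $\inf_{z\in\D}|\th'(z)|\ge\eta(c)>0$. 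Consequently $1/\th'\in H^\infty$, and hence $\th'=1/(1/\th')\in\mathcal N$ by the very definition of the Nevanlinna class. This one line settles what you call ``the main obstacle'' and ``the crux'': your entire final paragraph, which is an admittedly speculative plan to force $\th'\in\mathcal N$ by analyzing Blaschke zeros and singular factors, is never carried out and is in any case unnecessary.

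The same observation is what rescues your innerness argument, which as stated does not work. You propose to use the Fatou/Plessner dichotomy to ``produce a finite radial limit of $\th'$ on a positive-measure subset of $E$,'' where $E=\{\ze\in\T:\lim_{r\to1^-}|\th(r\ze)|<1\}$. But Plessner's theorem yields no such thing: it only says that almost every point is either a Fatou point or a Plessner point, and a point where $|\th'(r\ze)|\to\infty$ radially is simply not a Fatou point --- it may perfectly well be a Plessner point, since a Plessner point is compatible with the radial limit being $\infty$ (the full Stolz-angle cluster set being the whole sphere says nothing against that). So no contradiction arises from meromorphy alone; this is precisely why radial uniqueness statements on sets of positive measure require more than the raw dichotomy (the Lusin--Privalov theorem needs nontangential limits, or category-type hypotheses on the set). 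What is actually needed is that $\th'$ has \emph{finite} radial limits almost everywhere, and that is exactly what $\th'\in\mathcal N$ provides. The correct order of the argument is therefore the reverse of yours: first get $1/\th'\in H^\infty$ and $\th'\in\mathcal N$ from the uniform lower bound, then conclude that the set $E$, on which \eqref{eqn:esteta1} together with $\eta(t)\to\infty$ forces $|\th'(r\ze)|\to\infty$, must have measure zero, so $\th$ is inner; finally cite \cite{DCMFT}. Your implication (i)$\implies$(ii) with $\eta(t)=t$ is correct and identical to the paper's.
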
 

\begin{proof} Of course, (i) implies (ii) with $\eta(t)=t$. To prove the nontrivial implication (ii)$\implies$(i), 
observe that 
$$\inf\left\{\f{1-|\th(z)|^2}{1-|z|^2}:\,z\in\D\right\}>0$$ 
(by Schwarz's lemma), and so \eqref{eqn:esteta1} yields $\inf_{z\in\D}|\th'(z)|>0$. Hence $1/\th'\in H^\infty$ 
and $\th'\in \mathcal N$; in particular, $\th'$ has radial limits almost everywhere on $\T$. 
\par Now, if $\ze\in\T$ is a point at which $\lim_{r\to1^-}|\th(r\ze)|<1$, then \eqref{eqn:esteta1} shows that 
$\lim_{r\to1^-}|\th'(r\ze)|=\infty$. Consequently, the set of such $\ze$'s has zero measure on $\T$. It follows 
that $\th$ has radial limits of modulus 1 almost everywhere, and is therefore an inner function. To complete the 
proof, it remains to invoke the above-mentioned result from \cite{DCMFT}. 
\end{proof}

\par We conclude by mentioning two open questions that puzzle us. First, we would like to know which inner 
functions $I$ can be written as $I=\text{\rm inn}(\th')$ (where \lq\lq inn" stands for \lq\lq the inner 
factor of"), as $\th$ ranges over the nonconstant inner functions with $\th'\in\mathcal N$. Does every 
inner $I$ arise in this way? 
\par To pose the other question, let us introduce the notation $\si(I)$ for the {\it boundary spectrum} of 
an inner function $I$. Thus, $\si(I)$ is the smallest closed set $E\subset\T$ such that $I$ is analytic across 
$\T\setminus E$. Now, if $\th$ is inner (and nonconstant) with $\th'\in\mathcal N$, and if $I=\text{\rm inn}(\th')$, 
then it is easy to see that $\si(I)\subset\si(\th)$. Do we actually have $\si(I)=\si(\th)$? An affirmative 
answer seems plausible to us, but so far, we have only succeeded in verifying it under an additional 
hypothesis. 

\medskip

\end{document}